\documentclass[noamsfonts]{amsart}
\frenchspacing
\usepackage[charter,expert]{mathdesign}
\usepackage[osf]{XCharter}
\selectfont
\usepackage[activate={true,nocompatibility},final]{microtype}

\usepackage[all, arc]{xy}
\SelectTips{eu}{}
\entrymodifiers={+!!<0pt,\fontdimen22\textfont2>}

\usepackage[pdftex, colorlinks=true, linkcolor=blue, citecolor=magenta, linktocpage]{hyperref}

\theoremstyle{plain}
\newtheorem{thm}[subsubsection]{Theorem}
\newtheorem{lemma}[subsubsection]{Lemma}
\newtheorem{prop}[subsubsection]{Proposition}
\newtheorem{cor}[subsection]{Corollary}

\theoremstyle{definition}
\newtheorem{example}[subsubsection]{Example}

\theoremstyle{definition}


\def\cF{\mathcal{F}}
\def\cG{\mathcal{G}}

\def\11{\mathbf{1}}

\def\CC{\mathbf{C}} 
\def\DD{\mathbf{D}}

\def\QQ{\mathbf{Q}} 
\def\RR{\mathbf{R}}

\def\fn{\mathfrak{n}}

\def\can{\mathrm{can}}

\def\DMmon{DM^b_{\mathrm{mon}}}

\def\id{\mathrm{id}}

\def\Re{\mathrm{Re}\,}
\def\rat{\mathrm{rat}}

\def\Spec{\mathrm{Spec}}

\def\var{\mathrm{var}}

\def\Dmon{D^b_{\mathrm{mon}}}

\newcommand{\mapright}[1]{\xrightarrow{#1}}

\makeatletter
\newcommand{\holim@}[2]{%
      \vtop{\m@th\ialign{##\cr
                  \hfil$#1\operator@font holim\,$\hfil\cr
                      \noalign{\nointerlineskip\kern1.5\ex@}#2\cr
                          \noalign{\nointerlineskip\kern-\ex@}\cr}}%
                      }
                      \newcommand{\holim}{%
                            \mathop{\mathpalette\holim@{\leftarrowfill@\textstyle}}\nmlimits@
                        }
                        \makeatother

                        \title{Fourier-Sato transform for monodromic mixed Hodge modules}
\author{R. Virk}
\begin{document}
\maketitle
\renewcommand{\thesubsection}{\textbf{\arabic{subsection}}}
\renewcommand{\thesubsubsection}{\textbf{\arabic{subsection}.\arabic{subsubsection}}}
\subsection{Introduction}
\subsubsection{}We discuss a simple formula for the Fourier-Sato transform of a monodromic mixed Hodge module on a complex algebraic variety.
Existing treatments that endow the Fourier-Sato transform with mixed Hodge structures involve constructions of explicit Hodge/weight filtrations on the underlying D-module/perverse sheaf. In contrast, we give a formal approach using the vanishing cycles functor.

\subsubsection{}No claims of originality are being made - most of the key points can be extracted from M. Kashiwara and P. Schapira's book \cite{KS} and J-L. Brylinski's work \cite{B}.

\subsection{Notation}\label{s:conventions}
\subsubsection{}The complex (resp. real) numbers are denoted $\CC$ (resp. $\RR$). We write $\Re\colon \CC \to \RR$ for the function that assigns to a complex number its real part. The multiplicative group of non-zero complex numbers is denoted $\CC^*$. 

\subsubsection{}`Canonical' and `natural' are synonyms for `natural transformation of functors'. The terms `map' and `morphism' are used interchangeably.

\subsubsection{}With the exception of statements involving mixed Hodge modules, sheaves are taken with coefficients in a commutative unital ring of finite global dimension. When working with mixed Hodge modules, the underlying (perverse) sheaves are taken with coefficients in the rational numbers $\QQ$.

\subsubsection{}
Functors on sheaves are always derived. I.e., we write $f_*$ instead of $Rf_*$, etc.
Given a locally closed subspace $v\colon Z \to X$ and a complex of sheaves $F$, we set:
\[ \Gamma_Z(F) = v_*v^!(F). \]

\subsubsection{}A variety will mean a separated scheme of finite type over $\Spec(\CC)$. 
The bounded derived category of algebraically constructible sheaves, on the complex analytic space associated to a variety $X$, will be denoted $D^b_c(X)$. Given an algebraic action of $\CC^*$ on $X$, we write $\Dmon(X) \subset D^b_c(X)$ for the full subcategory of monodromic objects. I.e., those $F\in D^b_c(X)$ such that, for all $i$, the cohomology sheaves $H^i(F)$ are local systems when restricted to any $\CC^*$-orbit. Note: monodromic objects are not necessarily $\CC^*$-equivariant.

\begin{example}Consider $\CC^*$ acting on itself via multiplication. Then every local system on $\CC^*$ is monodromic. However, only trivial local systems are $\CC^*$-equivariant.
\end{example}

\subsubsection{}If $V\to S$ is an algebraic vector bundle, unless explicitly stated otherwise, $V$ will always be equipped with the $\CC^*$-action that acts via dilating the fibres of $V\to S$.

\pagebreak
\subsection{Preliminaries}\label{s:prelims}

\subsubsection{}
The following variation on the homotopy invariance of cohomology is a key tool. It is well known in the context of equivariant sheaves. The extension to monodromic sheaves requires no new ideas.
\begin{lemma}\label{homotopylemma}
Let $S$ be a variety and $\tau\colon V\to S$ an algebraic vector bundle. Write $i\colon S\to V$ for the zero section.
Let $F\in \Dmon(V)$. Then $i^*$ applied to the adjunction map $\tau^*\tau_* F \to F$ yields a canonical isomorphism $\tau_*F \mapright{\cong}i^*F$.
\end{lemma}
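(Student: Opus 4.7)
My strategy is to reduce to a single fibre and then invoke the conic version of the statement from \cite{KS}.

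I would first work stalkwise on $S$: the natural transformation $\tau_* F \to i^* F$ can be tested at each $s \in S$. Choosing a Zariski open $U \ni s$ over which the bundle trivializes, $V|_U \cong U \times \CC^n$, and passing to the limit over such neighbourhoods, $(\tau_* F)_s$ identifies with $R\Gamma(\CC^n, G)$ where $G = F|_{V_s}$, while $(i^*F)_s = G_0$; the induced map is the one of the statement. Since $G$ remains monodromic for the scalar $\CC^*$-action on $\CC^n$, the problem reduces to showing that for any $G \in \Dmon(\CC^n)$ the natural arrow $R\Gamma(\CC^n, G) \to G_0$ is an isomorphism.

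For the core case, restrict the $\CC^*$-action to its subgroup $\RR^*_{>0}$. Since a monodromic complex has cohomology sheaves locally constant on $\CC^*$-orbits, and every $\RR^*_{>0}$-orbit sits inside a $\CC^*$-orbit, $G$ is conic in the real sense of \cite[\S 3.7]{KS}. The desired isomorphism is then an instance of the classical conic homotopy invariance, witnessed by the real contraction $H\colon [0,1] \times \CC^n \to \CC^n$, $H(t,v) = tv$, which satisfies $H|_{t=1} = \id$ and $H|_{t=0} = i \circ \tau$.

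The main obstacle will be the bookkeeping around naturality: verifying that the isomorphism produced by the conic homotopy argument coincides canonically with the adjunction map of the statement, rather than some other natural morphism, and checking that the fibrewise reduction is compatible with this identification despite $\tau$ not being proper. The substantive analytic input — propagation of conic sheaves along the $\RR^*_{>0}$-action — is a standard ingredient of Kashiwara--Schapira, and the remainder of the argument is formal.
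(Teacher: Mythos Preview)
Your approach is essentially the paper's route (ii): observe that monodromic sheaves are conic for the restricted $\RR^+$-action and then invoke \cite[Proposition 3.7.5]{KS}. The paper simply cites that result directly in the relative setting (for the bundle $V\to S$), without passing to a single fibre.

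Your extra stalkwise reduction is where the only real wrinkle lies. The identification $(\tau_*F)_s \cong R\Gamma(\CC^n, F|_{V_s})$ is a base-change statement for the non-proper map $\tau$, and it does not hold for arbitrary constructible $F$; it is in fact a \emph{consequence} of the lemma you are proving (apply the lemma over $S$ and over a point, then compare). So as written the reduction is circular. You correctly flag this as a potential obstacle, but the cleanest fix is not more bookkeeping---it is to drop the fibrewise reduction altogether and apply the Kashiwara--Schapira conic result for $V\to S$ as it stands, which is exactly what the paper does.
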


\begin{proof}There are several ways to extract this from the literature. Let me give three:
\begin{enumerate}
\item By d\'evissage, it suffices to treat the case $i^*F = 0$. This is \cite[Lemme 6.1]{V}.
\item Monodromic sheaves are conic in the sense of \cite[\S3.7]{KS}. So our assertion is simply a special case of \cite[Proposition 3.7.5]{KS}.
\item 
Replace $\CC^*$ in \cite[Proposition 1]{So} with the subgroup consisting of strictly positive real numbers. The (formal) argument goes through verbatim. Note that this yields a slightly more general result for contracting actions (i.e., not just vector bundles). This argument goes back (at least) to \cite{Sp}. \qedhere
\end{enumerate}
\end{proof}

\begin{example}Consider $\CC$ as a vector bundle over a point. Write $j\colon \CC - \{0\} \hookrightarrow \CC$ for the inclusion. 
Let $L$ be a local system on $\CC - \{0\}$. Then $j_*L$ is monodromic. Lemma \ref{homotopylemma} recovers the familiar fact that the stalk cohomology of $j_*L$ at $0$ is isomorphic to $H^*(\CC - \{0\}; L)$.
\end{example}

\subsubsection{}Although not strictly needed for our purposes, this seems to be a convenient opportunity to clarify the connection between `conic sheaves' and `monodromic sheaves' in the algebraic context.

Let $\RR^+$ denote the multiplicative group of strictly positive real numbers. Suppose $X$ is a topological space endowed with an $\RR^+$-action. Then a sheaf on $X$ is called \emph{conic} if its restriction to each $\RR^+$-orbit is a local system.

Now suppose $X$ is a variety equipped with an algebraic $\CC^*$-action. Along with monodromic sheaves, we may also consider consider conic sheaves for the restricted action of $\RR^+\subset \CC^*$. Certainly, a monodromic sheaf is conic. The converse is also true for algebraically constructible sheaves.
\begin{prop}\label{conicmonodromic}An algebraically constructible conic sheaf is monodromic.
\end{prop}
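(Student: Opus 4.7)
The plan is to deploy Kashiwara--Schapira's microsupport formalism \cite{KS}. Write $SS(F)\subset T^*X$ for the microsupport of $F$. Both hypothesis and conclusion admit standard translations into cotangent geometry: conicity for the $\RR^+$-action is equivalent to the containment $SS(F)\subset T^*_{\RR^+}X$, the annihilator of the $\RR^+$-orbit tangent direction; analogously, monodromicity for the $\CC^*$-action is equivalent to $SS(F)\subset T^*_{\CC^*}X$, the annihilator of the full (real $2$-dimensional) $\CC^*$-orbit direction.

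With this dictionary in place, the proof is a short complex-structure computation. Kashiwara's theorem for algebraically constructible sheaves gives that $SS(F)$ is a $\CC$-analytic Lagrangian subvariety of $T^*X$; in particular, $SS(F)$ is stable under the fibrewise complex structure $J$ on the cotangent bundle. Because the $\CC^*$-action on $X$ is holomorphic, the generators of the two sub-actions satisfy $J\partial_{\RR^+}=\partial_{S^1}$, where $\partial_{S^1}$ generates the restricted $S^1$-action, and consequently $J$ carries $T^*_{\RR^+}X$ onto $T^*_{S^1}X$. Applying $J$-invariance of $SS(F)$ to the conicity containment $SS(F)\subset T^*_{\RR^+}X$ therefore yields $SS(F)\subset T^*_{S^1}X$ as well, and intersecting gives $SS(F)\subset T^*_{\RR^+}X\cap T^*_{S^1}X=T^*_{\CC^*}X$, which is the desired monodromicity.

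The main point requiring care is the bookkeeping between real and holomorphic cotangent bundles: the microsupport naturally inhabits the real cotangent, while Kashiwara's Lagrangian picture pertains to the holomorphic one. These are canonically identified as real vector bundles; under this identification, real and complex conormals of complex submanifolds coincide, and the fibrewise complex structure $J$ acts on a real cotangent vector $\xi$ by $(J\xi)(v)=\xi(Jv)$, making the key formula $J\cdot T^*_{\RR^+}X=T^*_{S^1}X$ a direct computation. A more elementary alternative, avoiding microsupport entirely, would be to reduce via a finite cover to the case of a single orbit $\cO\cong\CC^*$ and, for each point $p$ in the finite singular locus of an algebraic stratification of $F|_\cO$, use the constancy of $F$ along the radial line through $p$ together with the recollement description of a constructible sheaf to verify that the local monodromy of $F|_\cO$ around $p$ is trivial; the gluing data is then forced to be an isomorphism, exhibiting $F|_\cO$ as a local system.
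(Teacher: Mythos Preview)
Your microsupport argument is a genuinely different route from the paper's and the core idea is sound, but there is a gap in the step ``$SS(F)$ is a $\CC$-analytic Lagrangian subvariety; in particular, $SS(F)$ is stable under the fibrewise $J$''. Complex-analyticity of $SS(F)\subset T^*X$ only says that its \emph{tangent spaces} are preserved by the complex structure of the ambient manifold $T^*X$; it does not imply that the set itself is invariant under the fibrewise map $(x,\xi)\mapsto(x,i\xi)$ (for instance the complex hypersurface $\{\zeta_1=1\}$ is not). What rescues you is the extra input that $SS(F)$ is always $\RR^+$-conic in the fibres: for fixed $(x,\xi)\in SS(F)$ the set $\{w\in\CC^*:(x,w\xi)\in SS(F)\}$ is then a complex-analytic subset of $\CC^*$ containing the non-discrete real ray $\RR^+$, hence equals all of $\CC^*$. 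With that amendment the argument goes through. I would also note that the dictionary ``conic/monodromic $\Longleftrightarrow$ $SS(F)$ in the orbit annihilator'' is correct but is not a one-line citation from \cite{KS}; the cleanest justification runs via the (local) submersions to the orbit spaces together with the standard characterisation of sheaves with $SS$ in a relative conormal as local pullbacks.

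By contrast, the paper's proof is entirely elementary and avoids microsupport: it restricts to a single $\CC^*$-orbit $\cong\CC^*$, observes that algebraic constructibility forces $F$ to be a local system off a finite set, and then kills the vanishing cycles at each bad point using conicity along the radial ray through it. Your closing ``alternative'' is precisely this argument, phrased via recollement/local monodromy instead of vanishing cycles. What your main argument buys is a global, coordinate-free statement that makes the role of the complex structure explicit and would adapt to more general holomorphic group actions; what the paper's argument buys is brevity and no microlocal prerequisites.
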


\begin{proof}
The assertion reduces to showing that if an algebraically constructible sheaf $F$ on $\CC^*$ is conic, then it is a local system. For such an $F$, algebraic constructibility implies $F$ is a local system away from some finite number of points. For each such point, pick a local coordinate and consider the vanishing cycles of $F$ with respect to the coordinate. As $F$ is conic, these vanishing cycles are zero. Hence, $F$ is a local system over said point.
\end{proof}

\subsubsection{}
Vanishing cycles will continue to appear(!) below.
Given a morphism of varieties $f\colon X\to \CC$, we write $\phi_f\colon D^b_c(X) \to D^b_c(f^{-1}(0))$ for the vanishing cycles functor associated to $f$. Shift conventions are those of \cite[Chapter VIII]{KS}. That is, our $\phi_f$ is exact for perverse sheaves.
We have a canonical isomorphism (see \cite[Exercise VIII.13]{KS}):
\begin{equation}\label{topvanish} \phi_f(F) \mapright{\cong} \Gamma_{\{\Re f \geq 0\}}(F)|_{f^{-1}(0)}.\end{equation}

\pagebreak
\subsection{Fourier-Sato transform}\label{s:sato}
\subsubsection{}
Let $S$ be a variety and let $V\to S$ be an algebraic vector bundle. Write $V^{\vee}\to S$ for the dual bundle and let $\mu\colon V\times_S V^{\vee}\to \CC$ denote the evaluation pairing. Then we have the diagram:
\[ \xymatrix{
& V\times_S V^{\vee}\ar[ld]_{p}\ar[rd]^q \ar[rr]^{\mu} & & \CC \\
V & & V^{\vee}
}\]
where $p$ and $q$ are the evident projections. 
Let $F\in \Dmon(V)$. By definition \cite[Definition 3.7.8]{KS}, the \emph{Fourier-Sato transform} $F^{\wedge}$, is given by :
\[ F^{\wedge} =  q_*\Gamma_{\{\Re \mu\geq 0\}}(p^*F). \]

\subsubsection{}
Consider the diagram:
\[
\xymatrix{
& V\times_s V^{\vee} \ar[ld]_p \ar[rd]^q \ar[rr]^{\gamma} & & V^{\vee} \times \CC \ar[ld]_{\tau}\ar[r]^-t & \CC \ar[r]^{\Re} & \RR \\
 V & & V^{\vee}
}
\]
where $t$ and $\tau$ are the evident projections, and $\gamma(v,w) = (w,\mu(v,w))$.
The following is essentially contained in the proof of \cite[Proposition 10.3.18]{KS}.
\begin{prop}\label{firstmain}Let $F\in \Dmon(V)$. Then there is a canonical isomorphism:
\[ F^{\wedge} \mapright{\sim} \phi_t\gamma_*p^*(F). \]
\end{prop}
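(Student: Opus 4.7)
The plan is to reduce the statement to a homotopy-invariance calculation on the closed half-space $Z := \{\Re t \geq 0\} \subset V^\vee \times \CC$, using base change and the vanishing-cycles identity (\ref{topvanish}). Put $G := \gamma_* p^* F$, let $i_Z\colon Z \to V^\vee \times \CC$ be the closed inclusion, and let $i\colon V^\vee \hookrightarrow V^\vee \times \CC$ be the zero section; the latter factors as $i = i_Z \circ i_0$ through the inclusion $i_0\colon V^\vee \times \{0\} \to Z$.

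Since $\mu = t \circ \gamma$, we have $\gamma^{-1}(Z) = \{\Re \mu \geq 0\}$. The open/closed distinguished triangle defining $\Gamma$, together with standard base change for open inclusions applied to the complement, yields
\[ \gamma_* \Gamma_{\{\Re \mu \geq 0\}}(p^* F) \mapright{\sim} \Gamma_Z(G). \]
Pushing forward along $\tau$ and using $q = \tau \circ \gamma$ gives $F^\wedge \cong \tau_* \Gamma_Z(G)$. Meanwhile, (\ref{topvanish}) applied to $t$ gives $\phi_t G \cong i^* \Gamma_Z(G)$. It therefore suffices to produce a canonical isomorphism $\tau_* \Gamma_Z(G) \mapright{\sim} i^* \Gamma_Z(G)$.

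Set $K := i_Z^! G$ on $Z$, so that $\Gamma_Z(G) = i_{Z*} K$. Using $i_Z^* i_{Z*} = \id$ together with the factorization $i = i_Z \circ i_0$, the desired isomorphism rewrites as $(\tau \circ i_Z)_* K \mapright{\sim} i_0^* K$. To set up the relevant conicity, equip $V \times_S V^\vee$ with the $\CC^*$-action $\lambda \cdot (v, w) = (\lambda v, w)$. Then $p$ is equivariant for the dilating action on $V$, so $p^* F$ is monodromic, and $\gamma$ is equivariant for the $\CC$-dilating action on $V^\vee \times \CC$. Consequently $G$ is monodromic---and in particular $\RR^+$-conic---on $V^\vee \times \CC$. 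Since $Z$ is $\RR^+$-invariant, $K$ inherits $\RR^+$-conicity on $Z$.

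The $\RR^+$-action on $V^\vee \times \CC$ by dilating the $\CC$-factor contracts $Z$ onto the fixed locus $V^\vee \times \{0\}$, with contraction map $\tau \circ i_Z$ and section $i_0$. The required isomorphism $(\tau \circ i_Z)_* K \mapright{\sim} i_0^* K$ then follows from the contracting-action generalization of Lemma \ref{homotopylemma}, alluded to in item (iii) of its proof. The main obstacle is precisely this last step: the stated (vector-bundle) form of Lemma \ref{homotopylemma} does not apply directly to $\Gamma_Z(G)$ on $V^\vee \times \CC$, because this sheaf is only $\RR^+$-conic---the half-space $Z$ is not $\CC^*$-invariant, so the argument of Proposition \ref{conicmonodromic} cannot be used to upgrade conicity to monodromicity. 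Working one level down on $Z$, where the $\RR^+$-action becomes a genuine contraction onto its fixed locus, is what allows the extended homotopy lemma to be invoked.
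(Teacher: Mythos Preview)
Your argument follows the same skeleton as the paper's: rewrite $F^\wedge$ as $\tau_*\Gamma_{\{\Re t\geq 0\}}(\gamma_*p^*F)$ via $q=\tau\gamma$ and base change, then replace $\tau_*$ by $i^*$ using a homotopy/contraction lemma, then invoke \eqref{topvanish}. The only difference is in how the contraction step is justified. The paper simply cites Lemma~\ref{homotopylemma}, tacitly using its conic-sheaf incarnation (proof item (ii), i.e.\ \cite[Proposition~3.7.5]{KS}) applied directly to $\Gamma_{\{\Re t\geq 0\}}(G)$ on the line bundle $V^\vee\times\CC\to V^\vee$. You correctly observe that this sheaf is neither algebraically constructible nor $\CC^*$-monodromic, so the \emph{stated} form of Lemma~\ref{homotopylemma} does not literally apply; your workaround of passing to $K=i_Z^!G$ on the half-space $Z$ and invoking the contracting-$\RR^+$-action version (proof item (iii)) is a valid alternative. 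Both routes ultimately rest on the same underlying fact for conic sheaves, so the approaches are equivalent; yours is more scrupulous about hypotheses, while the paper's is shorter because it treats Lemma~\ref{homotopylemma} as a pointer to the general conic statement rather than to its monodromic specialization.
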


\begin{proof}
Write $i\colon V^{\vee} \to V^{\vee}\times \CC$ for the zero section. Then:
\begin{align*}
F^{\wedge} &= q_*\Gamma_{\{\Re \mu\geq 0\}}(p^*F) &\text{(by definition)} \\
&\cong \tau_*\gamma_*\Gamma_{\{\Re\mu\geq 0\}}(p^*F) \\
&\cong \tau_*\Gamma_{\{\Re t \geq 0\}}(\gamma_*p^*F) \\
&\cong i^*\Gamma_{\{\Re t \geq 0\}}(\gamma_*p^*F) &\text{(Lemma \ref{homotopylemma})} \\
&\cong \phi_t\gamma_*p^*(F) & \text{(by \eqref{topvanish}).} 
\end{align*}
\end{proof}

\begin{cor}The Fourier-Sato transform defines a functor $\Dmon(V)\to \Dmon(V^{\vee})$.
\end{cor}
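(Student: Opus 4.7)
The plan is to apply Proposition \ref{firstmain} and verify monodromicity of $\phi_t\gamma_* p^* F$ by tracking $\CC^*$-equivariance through each functor in the composition. The key observation is that $V\times_S V^\vee$ admits a $\CC^*$-action for which $\mu$ is invariant, namely $\lambda\cdot(v,w) = (\lambda^{-1}v, \lambda w)$. Equipping $V^\vee\times\CC$ with the action $\lambda\cdot(w,z) = (\lambda w, z)$ (trivial on the second factor), the maps $p$, $\gamma$, and $t$ become $\CC^*$-equivariant: $\gamma$ because $\mu$ is $\CC^*$-invariant, $p$ with respect to the inverse dilation on $V$ (whose orbits agree with those of the standard dilation), and $t$ because its target carries the trivial action.

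With these choices, each functor in the composition preserves monodromicity. The sheaf $p^*F$ is monodromic on $V\times_S V^\vee$: restricting $H^i(p^*F)$ to an orbit $\{(\lambda^{-1}v_0, \lambda w_0) : \lambda\in\CC^*\}$ yields the pullback of the local system $H^i(F)|_{\CC^*\cdot v_0}$ along $\lambda\mapsto \lambda^{-1}v_0$. Pushforward along the equivariant map $\gamma$ then gives a monodromic sheaf on $V^\vee\times\CC$, as does subsequent application of $\Gamma_{\{\Re t\geq 0\}}$, since $\{\Re t\geq 0\}$ is $\CC^*$-invariant (the action is trivial on the $\CC$-factor). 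Restricting to the invariant closed subvariety $t^{-1}(0) = V^\vee\times\{0\}$, on which the ambient $\CC^*$-action restricts to the standard dilation of $V^\vee$, delivers $\phi_t\gamma_* p^* F \in \Dmon(V^\vee)$. Combining with the isomorphism of Proposition \ref{firstmain} gives $F^\wedge \in \Dmon(V^\vee)$, and functoriality in $F$ is immediate from the formula.

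The main obstacle is the routine verification that each of the operations above (pullback along an equivariant map, pushforward along an equivariant map, local cohomology along an invariant closed subset, restriction to an invariant subspace) preserves monodromicity. These are standard extensions of familiar statements for equivariant sheaves, proved orbit-by-orbit via base change; the only subtlety worth flagging is that the monodromic sheaves involved are not themselves assumed $\CC^*$-equivariant, so the preservation statements must be phrased as conditions along each orbit rather than globally. A slightly more direct alternative, which avoids invoking Proposition \ref{firstmain}, is to run the same equivariance analysis on the original formula $F^\wedge = q_*\Gamma_{\{\Re \mu\geq 0\}}(p^*F)$: with the same action on $V\times_S V^\vee$, the closed set $\{\Re\mu\geq 0\}$ is $\CC^*$-invariant by construction, and $q$ is equivariant for the standard dilation on $V^\vee$.
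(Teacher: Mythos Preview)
Your approach --- introducing the anti-diagonal $\CC^*$-action $\lambda\cdot(v,w)=(\lambda^{-1}v,\lambda w)$ on $V\times_S V^{\vee}$ and tracking the orbit condition through each functor --- is the ``seen directly'' argument the paper alludes to but does not spell out. The paper's explicit alternative is shorter: algebraic constructibility is read off from the formula $\phi_t\gamma_*p^*$, and for monodromicity one cites the Kashiwara--Schapira fact that the Fourier--Sato transform of a conic sheaf is conic, then invokes Proposition~\ref{conicmonodromic} to pass from conic back to monodromic. Your route makes the $\CC^*$-geometry explicit and avoids appealing to the $\RR^+$-theory; the paper's route avoids re-proving that the standard operations respect the orbit condition.

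There is, however, a genuine gap in your justification. You assert that pushforward along the equivariant map $\gamma$ preserves monodromicity and that this is ``proved orbit-by-orbit via base change''. But restricting $\gamma_*(-)$ to a single $\CC^*$-orbit in $V^{\vee}\times\CC$ is base change along a locally closed immersion, and $\gamma$ is not proper (its fibres over points with $w\neq 0$ are affine hyperplanes in the fibres of $V$), so the naive base change you invoke does not hold. The claim that equivariant pushforward preserves monodromicity is true, but it needs a different argument: either via singular-support estimates (the correspondence computing $SS(\gamma_*G)$ is $\CC^*$-equivariant, so $\CC^*$-stability of $SS(G)$ propagates), or by noting that a monodromic $G$ is constructible with respect to a $\CC^*$-invariant stratification, and that $\gamma_*G$ is then constructible with respect to an induced invariant stratification of the target. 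Either fix repairs your proof; the same remark applies to your direct alternative via $q_*\Gamma_{\{\Re\mu\geq 0\}}p^*$, where $q$ is likewise non-proper.
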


\begin{proof}Algebraic constructibility follows from that of the vanishing cycles functor. That the Fourier-Sato transform of a monodromic sheaf is in turn monodromic can be seen directly. Alternatively, apply Proposition \ref{conicmonodromic} (the Fourier-Sato transform of a conic sheaf is in turn conic).
\end{proof}

\begin{example}\label{egquiver}
Work with sheaf coefficients in a field. Write $PerMon(\CC)$ for the category of monodromic perverse sheaves on $\CC$ (viewed as the trivial vector bundle over a point). It will be convenient to work with the shifted\footnote{To ensure t-exactness for perverse sheaves.} Fourier-Sato transform: for $F\in PerMon(\CC)$, set $\cF(F) = (F^{\wedge})[1]$.
Let $Glue$ be the category of diagrams: 
\[ \Psi \mapright{\can} \Phi \mapright{\var} \Psi \]
where $\Psi$ and $\Phi$ are finite dimensional vector spaces and $\id - \var\circ \can$ is an automorphism (equivalently $\id-\can\circ\var$ is an automorphism).
Then we have an equivalence of categories: 
\[ \cG \colon PerMon(\CC) \mapright{\cong} Glue \]
that sends a monodromic perverse sheaf $F$ to the diagram
\[ \psi(F)[-1] \mapright{\can} \phi(F) \mapright{\var} \psi(F)[-1] \]
Here $\psi$ and $\phi$ denote the nearby and vanishing cycles functor\footnote{Shift conventions are those of \cite[\S VIII.6]{KS}. So $\psi[-1]$ is t-exact for perverse sheaves.}, respectively, for the identity map on $\CC$. The maps $\can, \var$ are the canonical and variation morphisms, respectively (see \cite[\S VIII.6]{KS}). Under this equivalence, $\cF$ corresponds to swapping $\Psi$ and $\Phi$ (see \cite[Th\'eor\'eme 3.1]{BMV}). I.e., $\cG(\cF(F))$ is canonically isomorphic to the diagram:
\[ \phi(F) \mapright{\var} \psi(F)[-1] \mapright{\can} \phi(F) \]
\end{example}

\subsubsection{}
For $F\in \Dmon(V)$ one knows that there is \emph{some} canonical isomorphism: 
\[ \phi_t\gamma_!p^*(F) \mapright{\cong}\phi_t\gamma_*p^*(F) \]
 (use \cite[Theorem 3.7.7]{KS} and the argument of Proposition \ref{firstmain}). We need the slightly more precise:
\begin{prop}\label{secondmain}Let $F\in \Dmon(V)$. Then the canonical `forget supports' map $\gamma_! \to \gamma_*$ yields an isomorphism:
\[ \phi_t\gamma_!p^*(F) \mapright{\cong} \phi_t\gamma_*p^*(F).\]
\end{prop}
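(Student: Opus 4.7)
The plan is to mimic the chain of canonical isomorphisms from the proof of Proposition~\ref{firstmain}, substituting $\gamma_!$ for $\gamma_*$. Since $\gamma$ is $\CC^*$-equivariant (scaling $V$ on the source and scaling $\CC$ on the target), the sheaf $\gamma_! p^* F$ is monodromic on $V^\vee \times \CC$, and base change for $\gamma_!$ applied to the closed subset $\{\Re t \geq 0\}$ yields the canonical isomorphism $\Gamma_{\{\Re t \geq 0\}}(\gamma_! p^* F) \cong \gamma_! \Gamma_{\{\Re \mu \geq 0\}}(p^* F)$. Both are $\RR^+$-conic (the relevant cones being $\RR^+$-invariant) and algebraically constructible, so by Proposition~\ref{conicmonodromic} they are monodromic. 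Applying Lemma~\ref{homotopylemma} then gives
\[
\phi_t \gamma_! p^* F = i^* \Gamma_{\{\Re t \geq 0\}}(\gamma_! p^* F) \cong \tau_* \gamma_! \Gamma_{\{\Re \mu \geq 0\}}(p^* F),
\]
and the exact same chain with $!$ replaced by $*$ is the proof of Proposition~\ref{firstmain}, giving $\phi_t \gamma_* p^* F \cong \tau_* \gamma_* \Gamma_{\{\Re \mu \geq 0\}}(p^* F) = F^\wedge$.

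The statement is thereby reduced to showing that the forget-supports map induces an isomorphism $\tau_* \gamma_! G \to \tau_* \gamma_* G$ for $G = \Gamma_{\{\Re \mu \geq 0\}}(p^* F)$. My preferred route at this point is Verdier duality. Writing $r$ for the rank of $V \to S$, one has $D \gamma_! \cong \gamma_* D$ and $D p^* F \cong p^* (D F)[2r](r)$, and hence $D(\gamma_! p^* F) \cong \gamma_* p^* (DF)[2r](r)$. Since $\phi_t$ commutes with Verdier duality under the shift convention of \cite[Ch.~VIII]{KS}, applying Proposition~\ref{firstmain} to the monodromic sheaf $DF$ yields
\[
D(\phi_t \gamma_! p^* F) \cong \phi_t(\gamma_* p^* DF)[2r](r) \cong (DF)^\wedge [2r](r).
\]

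To close the loop one invokes the duality compatibility for Fourier--Sato on $\CC^*$-monodromic sheaves: $D(F^\wedge) \cong (DF)^\wedge [2r](r)$. This can be deduced from the $\RR^+$-conic duality of \cite[Prop.~3.7.13]{KS} together with the observation that multiplication by $-1 \in \CC^*$ identifies the cone $\{\Re \mu \geq 0\}$ with its opposite, so that on $\CC^*$-monodromic sheaves the $*$-Fourier--Sato agrees with its opposite-cone $!$-counterpart. Substituting with $DF$ in place of $F$ gives $D((DF)^\wedge) \cong F^\wedge [2r](r)$, whence $\phi_t \gamma_! p^* F \cong D\bigl((DF)^\wedge [2r](r)\bigr) \cong F^\wedge$. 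The main obstacle I anticipate is a naturality check: one must verify that the resulting isomorphism $\phi_t \gamma_! p^* F \cong F^\wedge \cong \phi_t \gamma_* p^* F$ really arises from the forget-supports natural transformation $\gamma_! \to \gamma_*$ rather than being an abstract isomorphism. This amounts to a diagram chase through the units and counits of the $(\gamma_!, \gamma^!)$ and $(\gamma^*, \gamma_*)$ adjunctions underlying the Verdier-duality identifications, together with careful bookkeeping of shifts and Tate twists to ensure the expected cancellations.
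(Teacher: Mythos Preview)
Your approach is genuinely different from the paper's, and it has a real gap precisely at the point you flag.

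The paper's proof is very direct: reduce to $F$ perverse, let $C$ be the cone of $\gamma_!p^*F \to \gamma_*p^*F$, and invoke \cite[Proposition 4.8]{B} to see that $C|_{\{y\}\times\CC}$ is (a shift of) a local system for every $y\in V^{\vee}$. Hence $\phi_t(C)=0$, and the forget-supports map itself is an isomorphism after applying $\phi_t$. No duality, no Fourier--Sato inversion, no naturality chase.

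Your duality route correctly produces an \emph{abstract} isomorphism $\phi_t\gamma_!p^*F \cong F^{\wedge} \cong \phi_t\gamma_*p^*F$. But notice that the paper already grants this much in the paragraph \emph{preceding} the proposition: ``one knows that there is \emph{some} canonical isomorphism \ldots\ We need the slightly more precise \ldots''. The entire content of Proposition~\ref{secondmain} is that the \emph{specific} map coming from $\gamma_!\to\gamma_*$ is an isomorphism. Your argument stops exactly where the proposition begins, and the ``diagram chase through the units and counits'' you defer is not a formality: knowing that source and target are isomorphic does not make a given map between them an isomorphism (think of an object with nontrivial endomorphisms). The paper's footnote does sketch an alternative in your spirit---reduce to irreducible perverse $F$, so that $\End$ is scalars and it suffices to show the map is nonzero---but even that nonvanishing ``requires a diagram chase'' that you have not carried out.

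A smaller issue: you assert that $\Gamma_{\{\Re t\geq 0\}}(\gamma_!p^*F)$ is algebraically constructible in order to invoke Proposition~\ref{conicmonodromic} and then Lemma~\ref{homotopylemma}. The half-space $\{\Re t\geq 0\}$ is not algebraic, so this sheaf is only $\RR$-constructible in general. This is harmless in practice---one falls back on the conic homotopy lemma \cite[Proposition 3.7.5]{KS}, as the paper's proof of Proposition~\ref{firstmain} implicitly does---but your appeal to Proposition~\ref{conicmonodromic} is not justified as written.
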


\begin{proof}
We may assume $F$ is perverse. Let $C$ denote the cone (in the sense of triangulated categories) of the map $\gamma_!p^*(F) \to \gamma_*p^*(F)$. According to \cite[Proposition 4.8]{B} the restriction of $C$ to any $\{y\} \times \CC$, with $y\in V^{\vee}$, is a local system (up to shift). Consequently, $\phi_t(C) = 0$.\footnote{An alternative argument that uses the fact that the transform is an equivalence and exact (up to shift) for perverse sheaves is to observe that it is sufficient to show that the induced map is non-zero on an irreducible perverse sheaf. Verifying this requires a diagram chase.}
\end{proof}

\subsection{Mixed Hodge modules}\label{mhm}
\subsubsection{}
For a variety $X$, write $DM^b(X)$ for the bounded derived category of mixed Hodge modules on $X$. By construction, $DM^b(X)$ is equipped with a conservative t-exact (for the perverse t-structure) functor:
\[ \rat\colon DM^b(X) \to D^b_c(X).\]
In particular, given a morphism $\alpha$ in $DM^b(X)$, if $\rat(\alpha)$ is an isomorphism, then so is $\alpha$. The point is that while it can be difficult to construct a map of mixed Hodge modules, verifying that one is an isomorphism is a problem that can be tackled in $D^b_c(X)$. All the usual functors/adjunctions lift to $DM^b(X)$ compatibly with $\rat$.

\subsubsection{}
If $X$ is endowed with an algebraic $\CC^*$-action, then we let
$\DMmon(X)\subset DM^b(X)$ denote the full subcategory consisting of $M\in DM^b(X)$ such that $\rat(M) \in \Dmon(X)$.
 
\subsubsection{}With the notation of \S\ref{s:sato}, for $M\in \DMmon(V)$, \emph{define}:
\[ M^{\wedge} = \phi_t\gamma_*p^*(M).\]
\begin{thm}\label{mixedfourier}The Fourier-Sato transform lifts to mixed Hodge modules. More precisely, we have a canonical isomorphism:
\[ \rat(M^{\wedge}) \cong \rat(M)^{\wedge}.\]
\end{thm}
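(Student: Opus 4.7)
The plan is to observe that the theorem is essentially a formal consequence of Proposition \ref{firstmain} together with Saito's theory, once we notice that each ingredient in the definition $M^{\wedge} = \phi_t\gamma_*p^*(M)$ already lives naturally in the world of mixed Hodge modules.

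First I would record that each of the functors appearing in the definition of $M^{\wedge}$ lifts to mixed Hodge modules compatibly with the $\rat$ functor: pullback $p^*$ along the smooth morphism $p\colon V\times_S V^{\vee}\to V$, pushforward $\gamma_*$ along $\gamma\colon V\times_S V^{\vee}\to V^{\vee}\times \CC$, and the vanishing cycles functor $\phi_t$ along $t\colon V^{\vee}\times \CC \to \CC$. All of these are standard parts of Saito's formalism, and in each case one has a canonical isomorphism $\rat\circ F \cong F\circ \rat$ (where $F$ denotes the corresponding functor on constructible sheaves).

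Composing these three compatibility isomorphisms gives a canonical identification
\[ \rat(\phi_t\gamma_*p^*(M)) \cong \phi_t\gamma_*p^*(\rat(M)). \]
Since $\rat(M)\in \Dmon(V)$, Proposition \ref{firstmain} supplies a further canonical isomorphism
\[ \phi_t\gamma_*p^*(\rat(M)) \cong \rat(M)^{\wedge}. \]
Concatenating yields the asserted canonical isomorphism $\rat(M^{\wedge}) \cong \rat(M)^{\wedge}$. As a byproduct, the right-hand side is monodromic by the corollary to Proposition \ref{firstmain}, so $M^{\wedge}$ indeed lies in $\DMmon(V^{\vee})$, confirming that $(-)^{\wedge}$ restricts to a functor $\DMmon(V)\to \DMmon(V^{\vee})$.

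There is no real obstacle here: the entire point of passing through Proposition \ref{firstmain} is to replace the original definition of the Fourier-Sato transform (which involves the functor $\Gamma_{\{\Re\mu\geq 0\}}$, not obviously available on mixed Hodge modules) by an expression built out of the six standard operations and vanishing cycles, all of which Saito has lifted to $DM^b$. The only mild point to be attentive to is that the compatibility isomorphisms for $p^*$, $\gamma_*$, and $\phi_t$ are canonical (i.e.\ natural transformations, in the sense of \S\ref{s:conventions}), so that the composite isomorphism above is itself canonical rather than depending on choices.
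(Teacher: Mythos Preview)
Your proposal is correct and is exactly the paper's approach: the paper's own proof is the single line ``Immediate from Proposition \ref{firstmain}'', and you have simply unpacked what that means using the compatibility of $p^*$, $\gamma_*$, and $\phi_t$ with $\rat$ (already recorded in \S\ref{mhm}). Your added remark that $M^{\wedge}\in \DMmon(V^{\vee})$ is a harmless bonus.
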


\begin{proof}Immediate from Proposition \ref{firstmain}.
\end{proof}

\begin{thm}Let $V\to S$ be an algebraic vector bundle of constant rank $r$. Write $\DD$ for Verdier duality, and $(r)$ for the $r$-th Tate twist on mixed Hodge modules. Let $M\in \DMmon(V)$. Then we have a canonical isomorphism:
\[ \DD(M^{\wedge}) \cong (\DD M)^{\wedge}[2r](r). \]
\end{thm}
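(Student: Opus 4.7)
The plan is to unwind the definition $M^{\wedge} = \phi_t\gamma_*p^*(M)$ and push $\DD$ through each of the three functors in turn, closing up at the end with Proposition \ref{secondmain}.

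First I would invoke two standard compatibilities in Saito's formalism: the commutation of Verdier duality with vanishing cycles, $\DD\phi_t \cong \phi_t\DD$, and the exchange formula $\DD\gamma_* \cong \gamma_!\DD$ (valid for any morphism). Together these rewrite
\[ \DD(M^{\wedge}) = \DD\phi_t\gamma_*p^*(M) \cong \phi_t\gamma_!\DD(p^*M). \]

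Next I would observe that $p\colon V\times_SV^{\vee}\to V$ is the base change of the vector bundle $V^{\vee}\to S$ along $V\to S$, hence smooth of relative dimension $r$. This yields $p^! \cong p^*[2r](r)$, and combined with $\DD p^* \cong p^!\DD$ one gets $\DD(p^*M) \cong p^*\DD(M)[2r](r)$. Substituting,
\[ \DD(M^{\wedge}) \cong \phi_t\gamma_!p^*\DD(M)\,[2r](r). \]

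Finally, Proposition \ref{secondmain} applied to $\DD M$ identifies $\phi_t\gamma_!p^*\DD M$ with $\phi_t\gamma_*p^*\DD M = (\DD M)^{\wedge}$, which gives the desired formula. The only point needing care is that Proposition \ref{secondmain} was stated in $\Dmon$ rather than in $\DMmon$, but the upgrade is automatic: the forget-supports transformation $\gamma_!\to\gamma_*$ exists already at the level of $DM^b$, so it produces a morphism in $DM^b(V^{\vee})$ which is an isomorphism after $\rat$ by Proposition \ref{secondmain}, and hence an isomorphism in $DM^b(V^{\vee})$ by conservativity of $\rat$.

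The main (and really the only) obstacle is to be sure that the commutations $\DD\phi_t \cong \phi_t\DD$ and $\DD\gamma_* \cong \gamma_!\DD$ used in the first step hold in the mixed Hodge module setting with no hidden Tate twist. Both are established parts of Saito's formalism, so no additional bookkeeping should be required; once they are in hand, the proof reduces to the three-line string of canonical isomorphisms above.
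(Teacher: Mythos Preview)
Your proposal is correct and follows essentially the same route as the paper: commute $\DD$ past $\phi_t$, $\gamma_*$, and $p^*$ to obtain $\phi_t\gamma_!p^!(\DD M) \cong \phi_t\gamma_!p^*(\DD M)[2r](r)$, then invoke Proposition~\ref{secondmain} (lifted to $DM^b$ via conservativity of $\rat$) to replace $\gamma_!$ by $\gamma_*$. The paper's write-up is terser---it collapses your first two steps into a single line and leaves the conservativity argument implicit---but the underlying chain of isomorphisms is the same.
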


\begin{proof}We have:
\begin{align*}
\DD(M^{\wedge}) &= \DD(\phi_t\gamma_*p^*(M)) & \text{(by definition)} \\
&\cong \phi_t\gamma_!p^!(\DD M) \\
&\cong \phi_t\gamma_!p^*(\DD M)[2r](r) & \text{($p$ is smooth)}\\
&\cong \phi_t\gamma_*p^*(\DD M)[2r](r) & \text{(Proposition \ref{secondmain})} \\
&= (\DD M)^{\wedge}[2r](r). 
\end{align*}
\end{proof}

\subsection{Complements on Fourier inversion}
\subsubsection{}
It seems plausible that $(-)^{\wedge}$ yields an equivalence of categories on mixed Hodge modules.
I do not know a \emph{simple} demonstration of this: it suffices to show that the adjoint to the transform (on the ordinary derived category of sheaves) lifts to Hodge modules. However, the presence of $\phi_t$ in the formula of Proposition \ref{firstmain} makes seeing that an adjunction (for Hodge modules) exists a bit murky.
Here is a sketch of an indirect argument.\footnote{I would be very grateful to anyone who would explain to me a more satisfying way of seeing this.}

Use the gluing description for $\Dmon(X\times \CC)$ given by \cite[Proposition 0.3]{Sa} to obtain a quiver description as in Example \ref{egquiver} (see \cite[\S3]{BMV} for the general version). Conclude that (ignoring Tate twists) the Fourier-Sato transform acts as an involution on the quiver - an evident equivalence. Now using the functoriality of the Fourier-Sato transform under maps of vector bundles (which we have not proved for mixed Hodge modules in this note) one sees that the general transform can be expressed in terms of the transform on the trivial bundle and standard pushforward/pullback functors that admit adjoints (the formula under \cite[Corollaire 7.20]{B}). This shows that the transform admits an adjoint on monodromic mixed Hodge modules. The unit and counit of adjunction must be isomorphisms because they are so on applying $\rat$.

\begin{example}
Mimicking Example \ref{egquiver}, write $MHMmon(\CC)$ for the category of monodromic mixed Hodge modules on $\CC$. Fix an orientation of $\CC$ so that we may identify a $\QQ$-vector space with its Tate twists.

Again, it will be convenient to work with the shifted Fourier-Sato transform: for $F\in MHMmon(\CC)$, set $\cF(F) = (F^{\wedge})[1]$.
Let $Glue$ be the category of diagrams:
\[ \Psi \mapright{\can} \Phi \mapright{\var} \Psi(-1) \]
where $\can$ and $\var$ are morphisms of polarizable mixed Hodge structures such that on the $\QQ$-vector space underlying $\Psi$, the induced linear map $\id - \var\circ \can$ is invertible.
Then we have an equivalence of categories: 
\[ \cG \colon MHMmon(\CC) \mapright{\cong} Glue \]
that sends a monodromic mixed Hodge module $M$ to the diagram:
\[ \psi(F)[-1] \mapright{\can} \phi(F) \mapright{\var} \psi(F)[-1](-1) \]
The notation is as in Example \ref{egquiver}. Under this equivalence, $\cF$ \emph{should}\footnote{I have not checked the details.} correspond to swapping $\Psi$ and $\Phi$. I.e., $\cG(\cF(F))$ should be canonically isomorphic to the diagram:
\[ \phi(F) \mapright{\var} \psi(F)[-1](-1) \mapright{\can} \phi(F)(-1) \]
\end{example}

\end{document}